\documentclass{template}
\usepackage{authblk}

\begin{document}

\title{A Cheeger Inequality for Small Set Expansion}

% draft email to Dana 
% Draft of note on SSE 
% Attached as PDF, let me know your thoughts. 
% I've included all of the content I want; the intro is what I think needs most polishing, but for a note of this length I didn't think a super-long intro would be necessary. 
% I'll fix the formatting issue with authors, not sure what's up with my Latex right now. 
% Question: What NSF grant numbers to include? I just used the ones from our previous paper. Maybe these are out of date. 

% TODO fix the multi authors 
%todo add NSF funding number 

% questions 

%todo ``nasty'' formula in ref steurer-lcb? ask dana in email?
% todo longer citation for barak notes? 
% todo introduce the decision problem? I guess... 

\author[1]{Akhil Jalan}
\affil[1]{Department of Computer Science, UT Austin}
\affil[1]{\textit{akhiljalan@utexas.edu}}

% \author{Akhil Jalan\thanks{\url{akhiljalan@utexas.edu}. }\\ UT Austin}
% \\ \href{mailto:akhiljalan@utexas.edu}{akhiljalan@utexas.edu} 
% }
% akhiljalan@utexas.edu}

% \author{Dana Moshkovitz}
% \\ \href{mailto:danama@cs.utexas.edu}{danama@cs.utexas.edu}
% }
% {danama@cs.utexas.edu}

\date{\today}
%\maketitle

\hypersetup{linkcolor={blue},citecolor={blue},urlcolor={blue}}

\bibliographystyle{alpha}

\maketitle

\begin{abstract}

The discrete Cheeger inequality, due to Alon and Milman \cite{alon-milman-cheeger-ineq}, is an indispensable tool for converting the combinatorial condition of graph expansion to an algebraic condition on the eigenvalues of the graph adjacency matrix. We prove a generalization of Cheeger's inqeuality, giving an algebraic condition equivalent to small set expansion. This algebraic condition is the $p \to q$ hypercontractivity of the top eigenspace for the graph adjacency matrix, for any $2 \leq p < q \leq \infty$. Our result generalizes a theorem of \cite{barak-et-al-2012} to the low small set expansion regime, and has a dramatically simpler proof; this answers a question of Barak \cite{barak2014sum}. 
\end{abstract}

{
  \hypersetup{linkcolor=blue}
  \tableofcontents
}
\pagebreak

\section{Introduction}

The discrete Cheeger inequality, due to Alon and Milman, proves that a combinatorial property of graphs (expansion) is equivalent to an algebraic one (spectral gap) \cite{alon-milman-cheeger-ineq}. Since its discovery, Cheeger's inequality has become fundamental in computer science and discrete mathematics; its applications are far too numerous to cite and we refer the reader to the excellent survey \cite{hoory2006expander}.

Small-set expansion (SSE) is a generalization of edge expansion that requires expansion only for sufficiently small sets of vertices. A long line of work reveals deep connections between small-set expansion and the Unique Games Conjecture \cite{raghavendra-steurer-2010, rst-reductions-2010, arora-barak-steurer, steurer-2010,rst-isoperimetric-2010, shortcode, bafna-2021,hopkins2020high}. In particular, small-set expansion for pseudorandom subsets of the Grassman graph was key to the breakthrough proof of the 2-to-2 Games Conjecture \cite{khot-minzer-safra-2018}. 

As part of their work on Unique Games, Barak et al. generalized Cheeger's inequality to small-set expanders \cite{barak-et-al-2012}. Their equivalence is between small-set expansion (a combinatorial property) and {\em hypercontractivity} of the high-eigenvalues eigenspace of the graph (an algebraic property). As a consequence, they immediately obtain a reduction from the Small Set Expansion problem of \cite{rst-reductions-2010} (and hence, from Unique Games) to the (gap) problem of deciding the $2 \to 4$ norm of a graph's top eigenspaces. This gives a new route towards resolving the Small Set Expansion Hypothesis and the Unique Games Conjecture. 

However, their result only holds if small sets are assumed to have very high expansion; for graphs with small-set expansion of, e.g., $0.01$, it does not guarantee any hypercontractivity. 

We prove a more general result that holds in this {\em low-expansion regime}. Our proof is radically simple; it sidesteps the difficulties of \cite{barak-et-al-2012} by reducing $2 \to 4$ hypercontractivity to an equivalence between $4/3 \to 2$ hypercontractivity and small set expansion; the latter is much easier to prove with standard rounding arguments.  

\begin{thrm}[Main Theorem, Informal]\label{main-theorem-informal}
Let $G = (V, E)$ be a finite, undirected, degree-regular graph, and $\delta, \eps \in (0, 1/4)$. Suppose $G$ is such that sets of density up to $\delta$ have expansion at least $\eps$. 

Then, let $V_{1 - \eps}$ denote the direct sum of eigenspaces of $G$ with eigenvalue at least $1 - \eps$. For any $2 \leq p \leq q \leq \infty$, the $p \to q$ norm of the eigenspace of $G$, $\norm V_{1-\eps} \norm_{p \to q} \leq \frac{1}{\sqrt{\delta}}$. 
\end{thrm}

Our main theorem describes the ``hard direction'' (combinatorial to algebraic) of the equivalence. Combined with the already known easy direction (algebraic to combinatorial - see Prop \ref{easy-direction}), this proves equivalence of small-set expansion and top eigenspace hypercontractivity. Moreover, our simplified proof technique also works in the high-expansion regime - see Sec \ref{high-expansion-section}. 

\textbf{Proof overview}: Our proof begins by proving that small-set expansion implies hypercontractivity for the $p \to 2$ norm, for $p \leq 2$, using a Local Cheeger bound. This argument is standard \cite{barak2014sum}. 
However, our hypercontractivity result is in the wrong regime; for applications to Unique Games, we are interested in $2 \to q$ norm for $q > 2$, rather than $p \to 2$ for $p < 2$. 

We can fix this with another known result, which we refer to as {\em \holder duality}. It states if $p^*, q^*$ denote the \holder duals of $p, q$ respectively (meaning $\frac 1 p + \frac{1}{p^*} = 1$, and likewise for $q, q^*$), then for a linear operator $A: \RR^n \to \RR^n$ we have $\norm A \norm_{p \to q} = \norm A^T \norm_{q^* \to p^*}$. 

Since $P_{1-\eps}$ is self-adjoint (it is a projection), it follows that  $\norm P_{1-\eps} \norm_{p \to 2} = \norm P_{1-\eps}^T \norm_{2 \to p^*} = \norm P_{1-\eps}\norm_{2 \to p^*}$. Since that $p \leq 2$ implies $p^* \geq 2$, we are done.  

\textbf{Organization}: In section 2 we provide some preliminaries on norms, hypercontractivity, and small-set expansion. Then in section 3, we prove our main Theorem \ref{main-thrm-full}. Finally, we prove equivalence in the high-expansion setting in Sec \ref{high-expansion-section}. 

\section{Preliminaries}

Throughout this note, let $G = (V, E)$ be an undirected, degree-regular graph. We follow the conventions of $n \defeq \abs{V}, m \defeq \abs{E}$. $A$ will always denote the (normalized) adjacency matrix of $G$. 

\subsection{Hypercontractivity}

\begin{defn}[Norms]
Let $p \geq 1$, $n \geq 1$ be integers, and $x \in \RR^n$. The $\ell_p$ norm of $x$ is $\norm x \norm_{\ell_p} \defeq (\sum\limits_{i=1}^{n} x_i^p)^{1/p}$. The $L_p$ norm of $x$ is $\norm x \norm_{L_p} \defeq (\EE\limits_{i \in [n]} x_i^p)^{1/p}$. 
\end{defn}

Unless explicitly noted otherwise, $\norm x \norm_p$ will denote $\norm x \norm_{L_p}$ throughout this note. 

\begin{defn}[$p \to q$ norm]
Let $p, q \geq 1$. The $p \to q$ norm of any $x \in \RR^n$ is 
$$\norm x \norm_{p \to q} \defeq \frac{\norm x \norm_q}{\norm x \norm_p}$$

If $S \subseteq \RR^n$ then 
$$\norm S \norm_{p \to q} \defeq \sup\limits_{x \in S} \norm x \norm_{p \to q}$$

Finally, if $T: \RR^n \to \RR^n$ is a function, then 
$$\norm T \norm_{p \to q} \defeq \sup\limits_{x \in \RR^n} \frac{\norm Tx \norm_q}{\norm x \norm_p}$$
\end{defn}

In particular, we will be interested in studying the $p \to q$ norms of the eigenspaces of graphs. If an operator or set has $p \to q$ norm at most $1$ for $p < q$, it is \textit{hypercontractive}. 

\begin{defn}
Let $p > q \geq 1$. Let $S \subset \RR^n$. If $\norm S \norm_{p \to q} < 1$, then $S$ is hypercontractive. Similarly if $T: \RR^n \to \RR^n$ and  $\norm T \norm_{p \to q} < 1$, then $T$ is hypercontractive. 
\end{defn}

We will abuse terminology and refer to any operator or set with bounded $p \to q$ norm as hypercontractive. 

\begin{defn}[Top eigenspace]
Let $G = (V, E)$ be an undirected, degree-regular graph and $\lambda \in [-1, 1]$. Let $A$ denote its normalized adjacency matrix. Then $V_\lambda$ is the direct sum of the eigenspaces of $A$ with eigenvalue $\geq \lambda$. 
\end{defn}

\begin{defn}[Projector]
Let $G = (V, E)$ be an undirected, degree-regular graph and $\lambda \in [-1, 1]$. Then ${P_\lambda(G): \RR^V \to \RR^V}$ is the projection operator onto $V_\lambda$. 
\end{defn}

We will write $P_\lambda$ when the graph is clear from context. Now, we are ready to define eigenspace hypercontractivity. 

\begin{defn}
Let $G = (V, E)$ be an undirected, degree-regular graph and $\lambda \in [-1, 1]$. Let $1 \leq p < q$. Let $C > 0$ be a constant. We say that $G$ is $(p, q, \lambda, C)$-hypercontractive if 
$$\norm P_\lambda(G) \norm_{p \to q} \leq C$$
\end{defn}

We can equivalently define hypercontractivity of a graph in terms of the $p \to q$ norm of the top eigenspace, rather than the projector - see Lemma \ref{hyp-equiv-projector-subspace}. 

\subsection{Subspace vs Projector Hypercontractivity}

In this section we will show that for the eigenspace projector $P_\lambda$, We will show that the $p \to q$ norms of the eigenspace $V_\lambda \defeq im(P_\lambda)$ and of the projector are equal; we will use both interchangeably. 

% we will show that the $p \to q$ norm of the projector and the eigenspace $V_\lambda$ itself are equal. 

\begin{lemma}\label{projection-is-orthogonal}
Let $G$ be an undirected, $d$-regular graph. Then for any $\lambda \in [-1, 1]$, the projector $P_\lambda$ is an orthogonal projection.   
\end{lemma}

\begin{proof}
The matrix $P_\lambda$ is a projector by definiton. It is an orthogonal projection iff its range is orthogonal to its null-space. This follows from the fact that the (distinct) eigenspaces of $G$ are all orthogonal to one another. 

That is, let $A$ be the adjacency matrix of $G$. Then since $G$ is undirected, $A = A^T$, and so $Av = \mu v$ and $Aw = \lambda w$ implies that $\mu \la v, w \ra = \la Av, w \ra = \la v, Aw \ra = \lambda \la v, w \ra$. Therefore $\mu \neq \lambda$ implies that $v \perp w$. 
\end{proof}

\begin{cor}
Using the same notation, $P_\lambda = P_\lambda^T$. 
\end{cor}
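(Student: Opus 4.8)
The plan is to deduce the corollary directly from Lemma \ref{projection-is-orthogonal}, which already establishes that $P_\lambda$ is an orthogonal projection; the entire content of the corollary is then the standard fact that every orthogonal projection is self-adjoint. I would present this via an explicit eigenbasis. Since $G$ is undirected, its normalized adjacency matrix $A$ is symmetric, so by the spectral theorem it admits an orthonormal eigenbasis $v_1, \ldots, v_n \in \RR^V$ with real eigenvalues $\mu_1, \ldots, \mu_n$. By definition $P_\lambda$ projects onto $V_\lambda$, the subspace spanned by those $v_i$ with $\mu_i \geq \lambda$, and because the basis is orthonormal this projector has the explicit form $P_\lambda = \sum_{i : \mu_i \geq \lambda} v_i v_i^T$. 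Each rank-one summand satisfies $(v_i v_i^T)^T = v_i v_i^T$, so transposing term by term immediately yields $P_\lambda^T = P_\lambda$.

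An equally short alternative avoids writing down the eigenbasis and instead uses the inner-product characterization of the transpose: it suffices to show $\la P_\lambda x, y \ra = \la x, P_\lambda y \ra$ for all $x, y \in \RR^V$. Decomposing $x = P_\lambda x + (x - P_\lambda x)$ and $y = P_\lambda y + (y - P_\lambda y)$, the orthogonality of the range and the kernel of $P_\lambda$ — which is precisely the orthogonal-projection property supplied by Lemma \ref{projection-is-orthogonal} — annihilates the two cross terms, leaving $\la P_\lambda x, y \ra = \la P_\lambda x, P_\lambda y \ra = \la x, P_\lambda y \ra$, as desired.

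There is essentially no obstacle here, since both routes rest on facts already in hand. The only point meriting a word of care is the justification of the orthonormal eigenbasis in the first approach — equivalently, that the range of $P_\lambda$ is orthogonal to its kernel in the second — but this is an immediate consequence of $A$ being symmetric, exactly the property exploited in the preceding lemma. I would favor the first, more computational route for its brevity and because it makes the term-by-term symmetry of $P_\lambda$ fully transparent.
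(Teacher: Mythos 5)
Your proof is correct and matches the paper's (implicit) reasoning: the paper states this corollary without proof, treating it as the standard fact that an orthogonal projection -- established in Lemma \ref{projection-is-orthogonal} -- is self-adjoint, which is exactly what both of your routes verify. Either the spectral-decomposition computation or the inner-product argument is a fine way to make that one-line deduction explicit.
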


\begin{lemma}[Equivalence of projector and subspace hypercontractivity]\label{hyp-equiv-projector-subspace}
For any $\lambda \in [-1, 1]$, $\norm P_\lambda \norm_{p \to q} = \norm V_\lambda \norm_{p \to q}$. 
\end{lemma}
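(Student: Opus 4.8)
The plan is to establish the equality by proving the two inequalities separately, using only that $P_\lambda$ is an orthogonal projection with image $V_\lambda$ (Lemma~\ref{projection-is-orthogonal}). I will work directly from the defining suprema $\norm P_\lambda \norm_{p \to q} = \sup_{x \in \RR^V} \norm P_\lambda x \norm_q / \norm x \norm_p$ and $\norm V_\lambda \norm_{p \to q} = \sup_{y \in V_\lambda} \norm y \norm_q / \norm y \norm_p$.

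The inequality $\norm V_\lambda \norm_{p \to q} \le \norm P_\lambda \norm_{p \to q}$ is immediate. A projection fixes its image, so $P_\lambda y = y$ for every $y \in V_\lambda = im(P_\lambda)$. Restricting the supremum that defines the operator norm to the subset $V_\lambda \subseteq \RR^V$ therefore already yields $\sup_{y \in V_\lambda} \norm P_\lambda y \norm_q / \norm y \norm_p = \sup_{y \in V_\lambda} \norm y \norm_q / \norm y \norm_p = \norm V_\lambda \norm_{p \to q}$, and the full supremum can only be larger.

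For the reverse inequality I would fix an arbitrary $x \in \RR^V$, put $y = P_\lambda x$, and note $y \in V_\lambda$. Factoring the quotient as
\[
\frac{\norm P_\lambda x \norm_q}{\norm x \norm_p} = \frac{\norm y \norm_q}{\norm y \norm_p} \cdot \frac{\norm y \norm_p}{\norm x \norm_p},
\]
the first factor is at most $\norm V_\lambda \norm_{p \to q}$ since $y \in V_\lambda$, so the whole statement reduces to the contractivity claim $\norm P_\lambda x \norm_p \le \norm x \norm_p$. Equivalently --- since $P_\lambda x = y$ exactly when $x \in y + V_\lambda^\perp$ --- one can write $\norm P_\lambda \norm_{p \to q} = \sup_{y \in V_\lambda} \norm y \norm_q / \inf_{z \in V_\lambda^\perp} \norm y + z \norm_p$, and the two norms coincide precisely when the infimum over the coset $y + V_\lambda^\perp$ is attained at $z = 0$.

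I expect this contractivity step to be the main obstacle. For $p = 2$ it is free: orthogonality (Lemma~\ref{projection-is-orthogonal}) gives the Pythagorean identity $\norm x \norm_2^2 = \norm P_\lambda x \norm_2^2 + \norm (I - P_\lambda) x \norm_2^2$, forcing $\norm P_\lambda x \norm_2 \le \norm x \norm_2$ and placing the coset minimizer at $z = 0$. This is exactly the regime the paper uses, since the Local Cheeger bound delivers a $p \to 2$ estimate and Hölder duality rewrites it as a $2 \to p^*$ estimate, so one of the two exponents is always $2$. For general $p \ne 2$ the step is genuinely delicate, because an orthogonal projection need not be an $L_p$-contraction; pinning down the contractivity (or restricting to the case where some exponent equals $2$ and invoking duality) is where the real content of the lemma lies.
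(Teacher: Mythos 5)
Your two-inequality setup is the same as the paper's, and your easy direction ($P_\lambda$ fixes $V_\lambda$, so restricting the supremum gives $\norm V_\lambda \norm_{p \to q} \leq \norm P_\lambda \norm_{p \to q}$) matches the paper's verbatim. For the reverse inequality the paper factors through exactly the contractivity claim you isolate, asserting that $\norm P_\lambda v \norm_p \leq \norm v \norm_p$ ``for any $p \geq 1$'' because $P_\lambda$ has spectrum inside $\{0,1\}$. The obstacle you flag is therefore not something you failed to see around --- it is precisely where the paper's own proof breaks. Having spectrum in $\{0,1\}$ (equivalently, being an orthogonal projection) gives contractivity only in $L_2$, via the Pythagorean identity you cite; it does not give $L_p$-contractivity for $p \neq 2$. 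Concretely, the orthogonal projection of $\RR^2$ onto the span of $(2,1)$ maps $(1,0)$ to $(4/5, 2/5)$, whose $L_1$ norm is $6/5$ times that of $(1,0)$; this projection has $1 \to 1$ norm at least $6/5$, while its image, being a line, trivially has $1 \to 1$ norm equal to $1$, and a short computation shows its $1 \to 2$ norm also strictly exceeds that of its image. So the general-$p$ contractivity the paper invokes is simply false for orthogonal projections, and your suspicion that the ``real content of the lemma'' lives in this step is exactly right.

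Where this leaves things: as written, your argument (like the paper's) establishes $\norm P_\lambda \norm_{p \to q} \leq \norm V_\lambda \norm_{p \to q}$ only when $p = 2$, where the Pythagorean identity supplies the contraction; for other $p$ neither your proposal nor the paper closes the gap, and any repair must use something about $P_\lambda$ beyond orthogonality (your coset reformulation makes clear that orthogonality alone is not enough). Be aware that the gap is not cosmetic downstream: Proposition \ref{1-to-2} passes from $\norm P_{1-\eps} \norm_{p \to q} \geq 1/\sqrt{\delta}$ to a witness $w \in V_{1-\eps}$, which is exactly the problematic direction with $1 \leq p \leq q \leq 2$ and neither exponent equal to $2$. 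Since the rest of that proof only ever uses the consequence $\delta \norm w \norm_2^2 \geq \norm w \norm_1^2$, one natural fix is to restate the hypercontractivity hypotheses directly in terms of $\norm V_\lambda \norm_{p \to q}$ (where no contractivity is needed) and reserve the projector formulation, together with Lemma \ref{holder-duality} and Lemma \ref{projection-is-orthogonal}, for the exponent-$2$ cases your argument genuinely covers.
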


\begin{proof}
First, if $v \in V_\lambda$ then $P_\lambda v = v$, so $\norm P_\lambda \norm_{p \to q} \geq \norm V_\lambda \norm_{p \to q}$. 

%TODO spectrum inside {0, 1}? 
Conversely, for any $v \in \RR^V$, notice that $\norm P_\lambda v \norm_p \leq \norm v \norm_p$ for any $p \geq 1$ (this follows from the fact that $P_\lambda$ has spectrum inside $\{0, 1\}$). Therefore $\norm P_\lambda \norm_{p \to q} \leq \frac{\norm P_\lambda v \norm_q}{\norm v \norm_p} \leq \frac{\norm P_\lambda v \norm_q}{\norm P_\lambda v \norm_p} \leq \norm V_\lambda \norm_{p \to q}$. 
\end{proof}

\subsection{Expansion and Small Set Expansion}

Next, we define the (edge) expansion of sets. Given a graph $G$ and set of vertices $S$, the edge expansion quantifies the probability that a random step beginning in $S$ will reach a vertex outside of $S$.

\begin{defn}[Expansion]
Let $G = (V, E)$ be an undirected, degree-regular graph and $S \subset V$. Then the expansion of $S$ is $$\Phi(S) \defeq \PP_{v \in S, w \sim v}[w \not \in S]$$

We also denote the non-expansion of $S$ as $\barr{\Phi(S)} \defeq 1 - \Phi(S)$. 
\end{defn}

Finally, let $\mu(S) \defeq \frac{\abs{S}}{n}$ denote the \textit{density}\footnote{There is a more general definition of density for graphs that are not degree-regular, but we will not need it here.} of a set of vertices. 

\begin{defn}[Small set expander]
Let $G = (V, E)$ be an undirected, degree-regular graph and $\delta > 0$. The $\delta$-expansion of $G$ is 
$$\Phi(\delta) \defeq \inf\limits_{S \subset V: \mu(S) \leq \delta} \Phi(S)$$
\end{defn}

\section{Equivalence of Eigenspace Hypercontractivity and Small Set Expansion}

\subsection{The easy direction: Hypercontractivity implies small-set expansion}

% todo put this in intro? 
It follows from \holder's inequality that hypercontractivity of $P_{\lambda}$ implies small-set expansion of $G$. 

\begin{prop}[\cite{moshkovitz-2021} 1.10]\label{easy-direction}
Let $\eps > 0, p, q \geq 1$. Then if for a graph $G = (V, E)$ the projector $P_\eps$ has bounded $p \to q$ norm, then for all $S \subset V$,
$$\barr{\Phi(S)} \leq \norm P_\eps \norm_{p \to q} \mu(S)^{1/p - 1/q} + \eps$$
\end{prop}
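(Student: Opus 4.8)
The plan is to prove the bound on the non-expansion $\barr{\Phi(S)}$ by expressing it in terms of the indicator vector of $S$ and its action under the adjacency operator $A$, then using the spectral decomposition of $A$ to split the contribution into the top eigenspace $V_\eps$ and its complement. Let me think about the key quantities.

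Let $x = \mathbf{1}_S$ be the indicator. The non-expansion is $\barr{\Phi(S)} = \PP_{v \in S, w \sim v}[w \in S]$, which equals $\frac{\la x, Ax\ra}{\la x, x\ra}$ where the inner product is the $L_2$ (expectation) inner product; the normalization $\la x,x\ra = \EE x_i^2 = \mu(S)$ and $\la x, Ax\ra = \EE_{v}[x_v (Ax)_v]$ captures the probability of staying in $S$.

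Key steps:
1. Write $\barr{\Phi(S)} = \frac{\la x, Ax\ra}{\mu(S)}$ using the quadratic form, with $x = \mathbf{1}_S$ (need $\la x, x \rangle = \mu(S)$ in the $L_2$ normalization).

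2. Decompose $x = P_\eps x + (I - P_\eps) x$. Since $A$ is self-adjoint with the top eigenspace $V_\eps$ having eigenvalues $\geq \eps$ (wait—these are $\geq \eps$?) and the orthogonal complement having eigenvalues $< \eps$, bound $\la x, Ax\ra \leq \la P_\eps x, A P_\eps x\ra + \eps \la (I-P_\eps)x, (I-P_\eps) x\ra$. Using $A$ having spectrum $\leq 1$ on the top part, $\la P_\eps x, A P_\eps x\ra \leq \|P_\eps x\|_2^2$, and on the complement eigenvalues are $< \eps$ so that term is $\leq \eps \|x\|_2^2 = \eps\,\mu(S)$.

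3. This yields $\barr{\Phi(S)} \leq \frac{\|P_\eps x\|_2^2}{\mu(S)} + \eps$. The remaining task is to bound $\|P_\eps x\|_2^2$ by $\norm P_\eps \norm_{p \to q}\, \mu(S)^{1 + 1/p - 1/q}$.

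4. Apply \holder's inequality to pass from $\|P_\eps x\|_2$ to the $p\to q$ norm. Write $\|P_\eps x\|_2^2 = \la P_\eps x, x\ra$ (using $P_\eps$ self-adjoint and idempotent, Lemma \ref{projection-is-orthogonal} and its corollary). Then by \holder with exponents $q$ and $q^*$, $\la P_\eps x, x\ra \leq \|P_\eps x\|_q \|x\|_{q^*}$. Bound $\|P_\eps x\|_q \leq \norm P_\eps \norm_{p \to q} \|x\|_p$. Since $x$ is a $0/1$ indicator, all its $L_r$ norms are explicit: $\|x\|_r = \mu(S)^{1/r}$. Substituting $\|x\|_p = \mu(S)^{1/p}$ and $\|x\|_{q^*} = \mu(S)^{1/q^*} = \mu(S)^{1 - 1/q}$ gives $\|P_\eps x\|_2^2 \leq \norm P_\eps \norm_{p\to q}\, \mu(S)^{1/p + 1 - 1/q}$, and dividing by $\mu(S)$ produces the claimed exponent $1/p - 1/q$.

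The main obstacle I anticipate is the bookkeeping of the two different normalizations ($L_p$ versus $\ell_p$) and making sure the indicator-vector norm identities $\|\mathbf{1}_S\|_r = \mu(S)^{1/r}$ are applied consistently, together with choosing the right \holder pairing so that the leftover exponent on $\mu(S)$ collapses to exactly $1/p - 1/q$. The spectral step (2) is routine given self-adjointness of $A$ and the definition of $V_\eps$; the genuinely delicate point is step 4, where I must use $P_\eps$ being an orthogonal projection to rewrite $\|P_\eps x\|_2^2 = \la P_\eps x, x\ra$ before invoking \holder, since applying the $p\to q$ norm bound directly to $\|P_\eps x\|_2$ would not produce the correct density exponent.
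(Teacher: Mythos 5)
Your proof is correct, and it is the standard argument the paper alludes to: the paper states this proposition without proof (deferring to the cited reference) but notes it ``follows from H\"older's inequality,'' which is exactly your step 4, combined with the routine spectral split in step 2. The key identity $\|P_\eps \mathbf{1}_S\|_2^2 = \la P_\eps \mathbf{1}_S, \mathbf{1}_S\ra$ before invoking H\"older, and the indicator norm computation $\|\mathbf{1}_S\|_r = \mu(S)^{1/r}$ in the $L_r$ normalization, both check out and yield the stated exponent $1/p - 1/q$.
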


\subsection{SSE implies \texorpdfstring{$p \to q$}{p to q} Hypercontractivity for \texorpdfstring{$1 \leq p < q \leq 2$}{1 to 2}}

In this section, we will show that small-set expansion (SSE) implies $p \to q$ hypercontractivity for $1 \leq p \leq q \leq 2$. The following Lemma is standard. 

\begin{lemma}\label{inner-prod-2-norm}
If $v \in V_{\lambda}$ then $\la v, Av \ra \geq \lambda \norm v \norm_2^2$. 
\end{lemma}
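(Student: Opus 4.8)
The plan is to reduce the claim to the spectral decomposition of $A$, for which the inequality is immediate. Since $G$ is undirected and degree-regular, its normalized adjacency matrix $A$ is real and symmetric, so the spectral theorem supplies an orthonormal eigenbasis $u_1, \dots, u_n$ with real eigenvalues $\mu_1, \dots, \mu_n$ (orthonormal with respect to the $L_2$ inner product $\la v, w \ra \defeq \EE_i v_i w_i$, so that $\la v, v \ra = \norm v \norm_2^2$). By the definition of the top eigenspace, $V_\lambda$ is exactly the span of those $u_i$ for which $\mu_i \geq \lambda$.

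First I would expand $v$ in this eigenbasis: because $v \in V_\lambda$, we may write $v = \sum_{i : \mu_i \geq \lambda} c_i u_i$ for scalars $c_i$, where every index appearing in the sum satisfies $\mu_i \geq \lambda$. Orthonormality then gives $\norm v \norm_2^2 = \sum_i c_i^2$. Next I would compute the quadratic form directly. Since $A u_i = \mu_i u_i$, linearity gives $Av = \sum_i \mu_i c_i u_i$, and orthonormality collapses the inner product to $\la v, Av \ra = \sum_i \mu_i c_i^2$. Because each surviving $\mu_i$ satisfies $\mu_i \geq \lambda$, every term obeys $\mu_i c_i^2 \geq \lambda c_i^2$, and summing yields $\la v, Av \ra \geq \lambda \sum_i c_i^2 = \lambda \norm v \norm_2^2$, as desired.

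There is essentially no obstacle here: the statement is a one-line consequence of the spectral theorem, and the argument is just the standard Rayleigh-quotient bound restricted to a top eigenspace. The only point requiring a little care is bookkeeping of the normalization convention — the paper measures length in the $L_2$ (expectation-normalized) norm, so I would make sure the eigenbasis is orthonormal in the matching inner product $\la v, w \ra = \EE_i v_i w_i$. Since the $1/n$ factor then appears identically in $\la v, Av \ra$ and in $\norm v \norm_2^2$, it cancels and does not affect the inequality.
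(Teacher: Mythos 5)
Your proof is correct and follows essentially the same route as the paper's: expand $v$ in an orthonormal eigenbasis via the spectral theorem, observe that only coefficients with $\mu_i \geq \lambda$ survive, and bound the resulting Rayleigh quotient termwise. Your explicit attention to the $L_2$ normalization convention is a minor point the paper handles implicitly (by carrying the $1/n$ factor through both sides), but the argument is the same.
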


\begin{proof}
By the spectral theorem $A = \sum_{i} \lambda_i u_i u_i^T$ for some orthonormal eigenbasis $\{u_i\}$. Let $v = \sum_i \alpha_i u_i$ in this basis. Notice that since $v \in V_\lambda$ that $\alpha_i = 0$ for $i$ such that $\lambda_i < \lambda$. Therefore, 

\begin{align}
\la v, Av \ra = \la \sum_i \alpha_i u_i, \sum_j \alpha_j \lambda_j u_j \ra 
= \sum_{i, j} \alpha_i \alpha_j \lambda_j \la u_i, u_j \ra 
= \frac 1 n \sum_i \alpha_i^2 \lambda_i 
\geq \lambda \cdot \frac 1 n \sum_i \alpha_i^2 
= \lambda \norm v \norm_2^2 
\end{align}
\end{proof}

% todo clean up this discussion? 

Next, we prove equivalence of SSE to eigenspace  $p \to q$ hypercontractivity, but only for the regime of $1 \leq p < q \leq 2$. This is the most technical part of our argument, so we discuss it here. 

{\bf The proof strategy.} Proofs of Cheeger's inequality typically use rounding to prove a contradiction. Specifically, assuming that $G$ has small spectral gap, there will exist $v \in \RR^V$ such that $Av = \lambda v$ for $\lambda$ near $1$. One can ``round'' $v$ to a vector $v^\prime \in \bb^V$ such that the corresponding set has low expansion, proving a contradiction. 

In our setting, we want to round a vector $v \in \RR^V$ to a $v^\prime \in \bb^V$ corresponding to a {\em small} set. The tool for this rounding is the Local Cheeger Bound (LCB).

More precisely, let $v^2 \in \RR^V$ denote $v$ with each entry squared. The Local Cheeger Bound gives a level set of $S$ of $v^2$ with certain properties that can prove an upper bound on $\Phi(S)$, showing a contradiction. 

The bound on $\Phi(S)$ depends on two quantities. First, we require that a lower bound on $\frac{\la v, Av \ra}{\norm v \norm_2^2}$. Informally, this means that $v$ has low ``expansion.'' To see this, notice that if $v = 1_B$ for some $B \subset V$, then $1 - \Phi(B) = \frac{\la v, Av \ra}{\norm v \norm_2^2}$. Therefore, even though $v$ does not correspond to a set in general, a lower bound on the ratio $\frac{\la v, Av \ra}{\norm v \norm_2^2}$ means that we would expect level sets of $v$ to have high non-expansion. 

Second, we require that $\delta \norm v \norm_2^2 \geq \norm v \norm_1^2$. After scaling, $v$ is a probability distribution on $V$. Therefore, this condition means that $v$ corresponds to a distribution on the vertices with high collision probability. Specifically, $\delta \norm v \norm_2^2 \geq \norm v \norm_1^2$ implies that $\EE_{i \in V} v_i^2 = \norm v \norm_2^2 \geq \frac{1}{\delta n}$. 

Let us formally state the version of the Local Cheeger Bound we need. 

\begin{lemma}[\cite{steurer-2010} 6.1]\label{Steurer-LCB}
For all $v \in \RR^V$ there exists a level set $S$ of $v^2$ such that if $\mu(S) \leq \delta$, $S$ has expansion

$$\Phi(S) \leq \frac{\sqrt{1 - \frac{\la v, Av \ra^2}{\norm v \norm_2^4}}}{1 - \frac{\norm v \norm_1^2}{\delta \norm v \norm_2^2}}$$
\end{lemma}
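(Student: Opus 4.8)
The plan is to prove this by a Cheeger-style threshold rounding applied to the nonnegative vector $v^2$, together with a truncation that forces the level set to be small. Write $M \defeq \max_i v_i^2$, and for $t \ge 0$ let $S_t \defeq \{i : v_i^2 > t\}$ be the (nested, shrinking) level sets of $v^2$; let $E(S, \barr{S})$ denote the number of edges leaving $S$, so that $\Phi(S) = E(S, \barr{S})/(d\abs{S})$ for the $d$-regular graph. First I would record the two quadratic-form identities that drive the computation: summing over edges $\{i,j\} \in E$,
\begin{align}
\sum_{\{i,j\} \in E}(v_i - v_j)^2 &= nd\,(\norm v \norm_2^2 - \la v, Av \ra), \\
\sum_{\{i,j\} \in E}(v_i + v_j)^2 &= nd\,(\norm v \norm_2^2 + \la v, Av \ra),
\end{align}
both obtained by expanding the squares and using $\sum_i d\,v_i^2 = nd\norm v \norm_2^2$ and $2\sum_{\{i,j\}\in E} v_i v_j = nd\,\la v, Av \ra$. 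I would also use the averaging principle: if a random threshold $t$ satisfies $\EE[E(S_t, \barr{S_t})] / \EE[d\,\abs{S_t}] \le c$, then some fixed $t^*$ in its support achieves $\Phi(S_{t^*}) \le c$.

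For the numerator I would integrate cut and volume over a uniform threshold. An edge $\{i,j\}$ is cut by $S_t$ exactly when $t$ lies between $v_i^2$ and $v_j^2$, so the cut integrated over all thresholds equals $W \defeq \sum_{\{i,j\} \in E} \abs{v_i^2 - v_j^2}$, while the volume integrates to $\sum_i d\,v_i^2 = nd\norm v \norm_2^2$. Factoring $\abs{v_i^2 - v_j^2} = \abs{v_i - v_j}\cdot\abs{v_i + v_j}$ and applying Cauchy--Schwarz over the edge set gives
$$W \le \sqrt{\sum_{\{i,j\} \in E}(v_i - v_j)^2}\;\sqrt{\sum_{\{i,j\} \in E}(v_i + v_j)^2} = nd\sqrt{\norm v \norm_2^4 - \la v, Av \ra^2},$$
by the two identities above. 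Dividing $W$ by the volume already yields the clean Cheeger quantity $\sqrt{1 - \la v, Av \ra^2 / \norm v \norm_2^4}$, the numerator of the claim; this is the ``standard'' half of the argument.

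The denominator is where the smallness of $S$ enters, and I expect this to be the main obstacle. Instead of sweeping all thresholds, I would restrict to $t \in [\tau, M]$, where $\tau$ is the threshold at which the level set first reaches density $\delta$, so that every $S_t$ with $t \ge \tau$ has $\mu(S_t) \le \delta$. Over this range the cut is still controlled by $W$ (up to the normalizing length $M - \tau$, which cancels), while the expected volume becomes proportional to $\sum_i \max(v_i^2 - \tau, 0) = n\norm v \norm_2^2 - \sum_i \min(v_i^2, \tau)$. The key estimate is $\sum_i \min(v_i^2, \tau) \le n\norm v \norm_1^2 / \delta$: I would bound $\min(v_i^2, \tau) \le \sqrt{\tau}\,\abs{v_i}$ to get $\sum_i \min(v_i^2, \tau) \le \sqrt{\tau}\,n\norm v \norm_1$, and separately bound $\sqrt{\tau} \le \norm v \norm_1 / \delta$ via Markov's inequality on the count $\delta n = \abs{S_\tau} = \abs{\{i : \abs{v_i} > \sqrt{\tau}\}} \le n\norm v \norm_1 / \sqrt{\tau}$. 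Combining, the truncated volume is at least $n\norm v \norm_2^2\,(1 - \norm v \norm_1^2 /(\delta \norm v \norm_2^2))$, and the averaging principle then produces a level set $S = S_{t^*}$ with $\mu(S) \le \delta$ whose expansion is at most $W$ divided by this volume, which is exactly the stated ratio (the bound being vacuous when the denominator is nonpositive, since $\Phi \le 1$). The delicate point is matching the power of $\delta$: the two copies of $\sqrt{\tau}$ must be split so that one supplies a factor $\norm v \norm_1$ and the other the factor $\norm v \norm_1 / \delta$, yielding $\delta$ rather than $\delta^2$ in the denominator.
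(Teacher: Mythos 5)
The paper does not actually prove this lemma; it is imported as a black box from \cite{steurer-2010} (Lemma 6.1), so there is no internal proof to compare against. Your argument is the standard truncated threshold-rounding proof of the Local Cheeger Bound, and it is essentially the right one: the two quadratic-form identities, the Cauchy--Schwarz factorization $\abs{v_i^2 - v_j^2} = \abs{v_i - v_j}\cdot\abs{v_i+v_j}$ for the numerator, and the restriction of the threshold sweep to $t \ge \tau$ so that every candidate level set has density at most $\delta$, are all correct and assemble into exactly the stated ratio. (Your parenthetical that the bound is vacuous when the denominator is nonpositive also matches how the lemma is used in Proposition \ref{1-to-2}, where $\delta \norm v \norm_1^{-2} \norm v \norm_2^2 \ge 1$ is established separately before the lemma is invoked.)

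One step is stated incorrectly, though it is repairable in a line. You write $\delta n = \abs{S_\tau}$, but this can fail badly: if the values $v_i^2$ have a large gap, the open superlevel set can jump from density $1$ straight down to density $1/n$ as $t$ crosses $\tau$, so $\abs{S_\tau}$ may be far smaller than $\delta n$, and the chain $\delta n = \abs{S_\tau} \le n\norm v \norm_1/\sqrt{\tau}$ does not follow as written. What your Markov step actually needs is a \emph{lower} bound of $\delta n$ on a level set at $\tau$, and this holds for the closed superlevel set: by minimality of $\tau$, every $t < \tau$ has $\abs{\{i : v_i^2 > t\}} > \delta n$, and letting $t \uparrow \tau$ (the sets stabilize, being finite) gives $\abs{\{i : v_i^2 \ge \tau\}} \ge \delta n$. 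Markov applied to $\abs{v_i}$ then gives $\delta n \le \abs{\{i : \abs{v_i} \ge \sqrt{\tau}\}} \le n\norm v \norm_1/\sqrt{\tau}$, hence $\sqrt{\tau} \le \norm v \norm_1/\delta$ as you need (and the case $\tau = 0$ is trivial, since then the truncation removes nothing). With that substitution the rest of your computation, including the splitting of the two factors of $\sqrt{\tau}$ to land on $\delta$ rather than $\delta^2$, goes through.
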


We are ready to prove the main claim of this section. In essence, we will assume that $P_{1-\eps}$ has large $p \to q$ norm for contradiction, and use this to obtain a distribution on $V$ with high collision probability (e.g. high $1 \to 2$ norm). Then, we will use the Local Cheeger Bound to round this distribution to a small set with very low expansion. 

\begin{prop}\label{1-to-2}
Let $G = (V, E)$ be a finite, undirected degree-regular graph such that for all $S \subset V$, $\mu(S) \leq 4\delta \rarr \Phi(S) \geq 2\sqrt{\eps}$.

Let $\abs{V} = n$, and $\eps > 0$. Then, for any $1 \leq p \leq q \leq 2$, it follows that $\norm P_{1 - \eps}\norm_{p \to q} < \frac{1}{\sqrt{\delta}}$. 
\end{prop}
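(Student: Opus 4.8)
The plan is to first reduce the entire family of norms $1 \le p \le q \le 2$ to the single case $p = 1$, $q = 2$, and then prove $\norm P_{1-\eps} \norm_{1 \to 2} < \frac{1}{\sqrt{\delta}}$ by a rounding argument built on the Local Cheeger Bound.

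For the reduction, I would show that for every $v \in \RR^V$ and every $1 \le p \le q \le 2$ we have $\norm v \norm_{p \to q} \le \norm v \norm_{1 \to 2}$. Writing $h(r) \defeq \log \norm v \norm_r$, this is exactly the inequality $h(q) - h(p) \le h(2) - h(1)$, which follows immediately from the monotonicity of the $L_r$ norm in $r$ under the uniform probability measure: since $1 \le p$ and $q \le 2$ we have $h(p) \ge h(1)$ and $h(q) \le h(2)$, so $h(q) - h(p) \le h(2) - h(1)$. Taking suprema over $v \in V_{1-\eps}$ and invoking Lemma \ref{hyp-equiv-projector-subspace} then gives $\norm P_{1-\eps} \norm_{p \to q} = \norm V_{1-\eps} \norm_{p \to q} \le \norm V_{1-\eps} \norm_{1 \to 2} = \norm P_{1-\eps} \norm_{1 \to 2}$, so it suffices to bound the $1 \to 2$ norm.

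For the main case I would argue by contradiction: suppose $\norm P_{1-\eps} \norm_{1 \to 2} \ge \frac{1}{\sqrt{\delta}}$. By Lemma \ref{hyp-equiv-projector-subspace} there is a $v \in V_{1-\eps}$ with $\norm v \norm_2^2 / \norm v \norm_1^2 \ge \frac{1}{\delta}$, that is, $\norm v \norm_1^2 \le \delta \norm v \norm_2^2$, which is the ``high collision probability'' condition. I then apply the Local Cheeger Bound (Lemma \ref{Steurer-LCB}) to $v$ with density parameter $4\delta$, producing a level set $S$ of $v^2$ with $\mu(S) \le 4\delta$, and bound the two pieces of its estimate separately. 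Since $v \in V_{1-\eps}$, Lemma \ref{inner-prod-2-norm} gives $\la v, Av \ra \ge (1-\eps) \norm v \norm_2^2 \ge 0$, so the numerator satisfies $\sqrt{1 - \la v, Av \ra^2 / \norm v \norm_2^4} \le \sqrt{1 - (1-\eps)^2} \le \sqrt{2\eps}$; and the collision condition gives $\norm v \norm_1^2 / (4\delta \norm v \norm_2^2) \le \frac{1}{4}$, so the denominator is at least $\frac{3}{4}$. Combining, $\Phi(S) \le \frac{4}{3}\sqrt{2\eps} = \frac{4\sqrt{2}}{3}\sqrt{\eps} < 2\sqrt{\eps}$, contradicting the hypothesis that every set of density at most $4\delta$ has expansion at least $2\sqrt{\eps}$.

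The reduction step is essentially free, so the real work lies in the rounding paragraph. The main obstacle I anticipate is the bookkeeping of constants: the factor $4$ in the density parameter $4\delta$ and the factor $2$ in the threshold $2\sqrt{\eps}$ are precisely what force the numerator below $\sqrt{2\eps}$ and the denominator above $\frac{3}{4}$, producing $\frac{4\sqrt{2}}{3}\sqrt{\eps} < 2\sqrt{\eps}$ with a small margin to spare (a margin that also absorbs the case where the supremum defining the $1 \to 2$ norm is only approached, not attained). A secondary point to verify carefully is that invoking Lemma \ref{Steurer-LCB} with parameter $4\delta$ genuinely yields a level set of density at most $4\delta$, so that the small-set expansion hypothesis applies to $S$.
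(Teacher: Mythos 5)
Your proposal is correct and follows essentially the same route as the paper: both reduce the general $p \to q$ case to the $1 \to 2$ collision-probability condition via monotonicity of $L_r$ norms, then apply Lemma \ref{Steurer-LCB} with density parameter $4\delta$ and bound the numerator by $\sqrt{2\eps}$ and the denominator by $3/4$ to get $\Phi(S) \leq \tfrac{4\sqrt{2}}{3}\sqrt{\eps} < 2\sqrt{\eps}$, contradicting the expansion hypothesis. The only cosmetic difference is that you isolate the reduction to the $1 \to 2$ norm as a separate first step, whereas the paper manipulates the general $p, q$ inequality directly.
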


\begin{proof}
Suppose towards a contradiction that $\norm P_{1 - \eps}\norm_{p \to q} < \frac{1}{\sqrt{\delta}}$. Then there there exists $w \in V_{1 - \eps}$ such that $\delta^{q/2} \norm w \norm_{q}^{q} \geq \norm w \norm_p^{q}$.

% \footnote{Note that it is the opposite for $\ell_p$ norms. That is, $s \geq r \geq 1$ implies that $\norm v \norm_s \leq \norm v \norm_r$.} 
For any $s \geq r \geq 1$, a convexity argument shows that for all $v \in \RR^n$, $\norm v \norm_{L^s} \geq \norm v \norm_{L^r}$. Therefore,  

$$\delta^{q/2} \norm w \norm_2^q \geq \delta^{q/2} \norm w \norm_{q}^{q} \geq \norm w \norm_p^{q} \geq \norm w \norm_1^q$$

Taking the $(2/q)^{th}$ power gives us that $\delta \norm w \norm_2^2 \geq \norm w \norm_1^2$.

% Note we can assume $w$ has all non-negative entries, 

% By monotonicity, since $q/2 \leq 2$ we have that $\delta \norm w \norm_{2}^{q/2} \geq \norm w \norm_1^{q/2}$. 

Next, normalize $w$ so that $\sum_i \abs{w_i} = 1$. Denote the normalized vector as $z$. 

Next, let $A$ be the normalized adjacency matrix. By Lemma \ref{inner-prod-2-norm}, we have that $z \in V_{1 - \eps}$, so $\la z, Az \ra \geq (1 - \eps) \norm z \norm_2^2$. 

Next, let $z^2$ denote $z$ with all entries squared. By Lemma \ref{Steurer-LCB} ther is a level set $S$ of $z^{2}$ such that $\mu(S) \leq 4 \delta$ and 

$$\Phi(S) \leq \frac{\sqrt{1 - \frac{\la z, Az \ra^2}{\norm z \norm_2^4}}}{1 - \norm z \norm_1^2 / (4 \cdot \delta) \norm z \norm_2^2}$$

We know that $\la z, Az \ra^2 \geq (1 - \eps)^2 \norm z \norm_2^4$. So the numerator simplifies to $\sqrt{1 - \frac{\la z, Az \ra^2}{\norm z \norm_2^4}} \leq \sqrt{1 - (1 - \eps)^2} = \sqrt{2\eps - \eps^2}$.

For the denominator, we will use the fact that $\delta \norm z \norm_2^2 \geq \norm z \norm_1^2$. 

Simplifying, we obtain that 

$$\Phi(S) \leq \frac{\sqrt{2\eps - \eps^2}}{1 - \frac 1 4} < 2 \sqrt{\eps}$$

But we assumed that $\Phi(S) \geq 2\sqrt{\eps}$. Contradiction.
\end{proof}

\subsection{Proof of Main Theorem}\label{main-low-expansion}

Our main theorem follows from Prop \ref{1-to-2}, combined with the following simple Lemma (``\holder duality.'')

\begin{lemma}[\cite{bhattiprolu-2018} 2.4]\label{holder-duality}
Let $A: \RR^n \to \RR^n$ be a linear operator and $1 \leq p, q \leq \infty$. Let $p^*$ be the unique real such that $\frac 1 p + \frac{1}{p^*} = 1$ (its \holder dual), and let $q^*$ be the \holder dual of $q$. 

Then, $$\norm A \norm_{L^p \to L^q} = \norm A^T \norm_{L^{q^*} \to L^{p^*}}$$ 
\end{lemma}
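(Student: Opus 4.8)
The plan is to derive this statement entirely from the duality of $L^p$ norms, which in finite dimensions is exact. The key fact I would record first is that, with respect to the averaged inner product $\la x, y \ra \defeq \EE_{i \in [n]} x_i y_i$, every $L^p$ norm admits the dual characterization
$$\norm x \norm_{L^p} = \sup_{\norm y \norm_{L^{p^*}} \leq 1} \la x, y \ra,$$
valid for all $1 \leq p \leq \infty$. One inequality is just \holder's inequality; the reverse is the standard observation that this bound is tight, the supremum being attained in finite dimensions by an explicit $y$ built from $\mathrm{sgn}(x_i)\,\abs{x_i}^{p-1}$, suitably normalized.

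The second ingredient I would verify is that $A^T$ is genuinely the adjoint of $A$ with respect to this same averaged inner product. This holds because the factor $1/n$ in $\la \cdot, \cdot \ra$ cancels: $\la Ax, y \ra = \frac 1 n (Ax)^T y = \frac 1 n x^T A^T y = \la x, A^T y \ra$. This is the one place where the $L^p$ (rather than $\ell^p$) normalization must be tracked carefully, and it is the only genuine subtlety in the argument.

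With these in hand, the proof is a short chain of equalities together with one interchange of suprema. Starting from
$$\norm A \norm_{L^p \to L^q} = \sup_{x \neq 0} \frac{\norm{Ax} \norm_{L^q}}{\norm x \norm_{L^p}},$$
I would expand $\norm{Ax}\norm_{L^q}$ using the dual characterization and then move $A$ across the inner product via adjointness to obtain
$$\norm A \norm_{L^p \to L^q} = \sup_{x \neq 0} \; \sup_{\norm y \norm_{L^{q^*}} \leq 1} \frac{\la x, A^T y \ra}{\norm x \norm_{L^p}}.$$
Swapping the two suprema (always legal, since either order equals the supremum of the single family over the product set) and recognizing the inner supremum over $x$ as exactly $\norm{A^T y}\norm_{L^{p^*}}$ by the dual characterization again, the right-hand side collapses to $\sup_{\norm y \norm_{L^{q^*}} \leq 1} \norm{A^T y}\norm_{L^{p^*}} = \norm{A^T}\norm_{L^{q^*} \to L^{p^*}}$, which is the claim.

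The hard part, such as it is, is not any individual step but keeping the conventions consistent: ensuring that the same averaged inner product underlies both the norm duality and the adjoint identity, and checking the boundary cases $p = 1$, $p = \infty$ (and likewise for $q$), where one must confirm that the dual characterization and the attainment of the suprema still hold. In finite dimensions all of these are routine, so I expect the entire argument to occupy only a few lines once the conventions are pinned down.
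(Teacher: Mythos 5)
The paper does not actually prove this lemma --- it is imported by citation from \cite{bhattiprolu-2018} --- so there is no in-paper argument to compare against. Your proof is the standard duality argument and it is correct: the dual characterization $\| x \|_{L^p} = \sup_{\| y \|_{L^{p^*}} \leq 1} \la x, y \ra$ with respect to the averaged inner product, the observation that the $1/n$ factors cancel so that $A^T$ really is the adjoint for that inner product, and the interchange of suprema together give exactly the claimed identity, with the endpoint cases $p, q \in \{1, \infty\}$ still going through in finite dimensions (e.g.\ for $L^\infty$ the extremizer $y = n\,\mathrm{sgn}(x_{i_0}) e_{i_0}$ has $\| y \|_{L^1} = 1$ under the expectation normalization). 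Your emphasis on tracking the $L^p$ versus $\ell^p$ convention is well placed, since that is the one point where a careless version of this argument would pick up spurious powers of $n$.
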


\begin{thrm}[Main Theorem, Formal]\label{main-thrm-full}
Let $G = (V, E)$ be a finite, undirected, degree-regular graph, and $\delta, \eps \in (0, 1/4)$. Suppose $\Phi(4\delta) \geq 2\sqrt{\eps}$.

Then for any $2 \leq p < q \leq \infty$, we claim that $\norm P_{1 - \eps}\norm_{p \to q} < \frac{1}{\delta^{1/2}}$. 
\end{thrm}

\begin{proof}
Let $p^*, q^*$ denote the \holder duals of $p, q$ respectively. Since $1 \leq q^* < p^* \leq 2$, by Prop \ref{1-to-2}, it follows that $\norm P_{1 - \eps}\norm_{q^* \to p^*} < \frac{1}{\sqrt{\delta}}$.

Then, by Lemma \ref{holder-duality}, $\norm P_{1 - \eps}^T \norm_{(p^*)^* \to (q^*)^*} < \frac{1}{\sqrt{\delta}}$. Since $(p^*)^* = p$ and $(q^*)^* = q$, this simplifies to $\norm P_{1 - \eps}^T \norm_{p \to q} < \frac{1}{\sqrt{\delta}}$.

Finally, by Lemma \ref{projection-is-orthogonal}, $P_{1-\eps}^T = P_{1-\eps}$. So we conclude $\norm P_{1 - \eps} \norm_{p \to q} < \frac{1}{\sqrt{\delta}}$. 
\end{proof}

\subsection{The High-Expansion Regime}\label{high-expansion-section}

Previous work proves that if $G$ is a graph where small sets have very high expansion, then the top eigenspace is hypercontractive \cite{barak-et-al-2012} . Our simplified proof technique works in this high-expansion regime as well, if we use a different version of the Local Cheeger Bound. 

\begin{lemma}[LCB for High-Expansion, \cite{steurer-2010} 2.1]\label{lcb-high-expansion}
Let $G$ be a regular graph and $z \in \RR^V$ be such that $\norm Az \norm_2^2 \geq \eps \norm z \norm_2^2$ and $\norm z \norm_1^2 \leq \delta \norm z \norm_2^2$. Then there exists a level set $S$ of $(Az + z)^2 \in \RR^V$, such that $\Phi(S) \leq 1 - C \eps^2$. 
\end{lemma}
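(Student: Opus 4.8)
The plan is to prove a local Cheeger bound in contrapositive form: from a vector $z$ that is localized (small $\norm{z}\norm_1/\norm{z}\norm_2$) and on which $A$ acts with large norm, I want to extract a small, poorly expanding level set. First I would reduce to the case $z \geq 0$ entrywise by replacing $z$ with its absolute value: since $A$ has nonnegative entries, $\abs{Az} \leq A\abs{z}$ coordinatewise, so $\norm{A\abs{z}}\norm_2 \geq \norm{Az}\norm_2$, while $\norm{\abs{z}}\norm_1 = \norm{z}\norm_1$ and $\norm{\abs{z}}\norm_2 = \norm{z}\norm_2$; thus $\abs{z}$ satisfies both hypotheses at least as strongly. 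This reduction is not cosmetic: it also excludes the degenerate case where $z$ correlates with the bottom ($-1$) eigenspace, for which $Az + z \approx 0$ and no rounding is possible.

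With $z \geq 0$ fixed, I would work with the amplified vector $y \defeq Az + z = (A + I)z$. The operator $A + I$ is positive semidefinite, doubles the top ($+1$) eigenspace and annihilates the bottom one, so the assumption $\norm{Az}\norm_2^2 \geq \eps\norm{z}\norm_2^2$ should translate into $y$ having a large Rayleigh quotient. Concretely, expanding $\la y, Ay\ra = \la z, Az\ra + 2\la z, A^2 z\ra + \la z, A^3 z\ra$ and using that each summand is nonnegative (because $z \geq 0$ and $A z$, $A^2 z$ are nonnegative), I get $\la y, Ay\ra \geq 2\la z, A^2 z\ra = 2\norm{Az}\norm_2^2 \geq 2\eps\norm{z}\norm_2^2$; together with $\norm{y}\norm_2^2 \leq 4\norm{z}\norm_2^2$ this yields $\la y, Ay\ra/\norm{y}\norm_2^2 \geq \eps/2$, the algebraic statement that $y$ does not expand.

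Next I would round $y$ to a set by the standard threshold construction, working with the level sets $S_t \defeq \{i : y_i^2 > t\}$ of the nonnegative vector $y^2$. To keep $S$ small I would transport the collision hypothesis to $y$: since $\norm{y}\norm_1 \leq 2\norm{z}\norm_1$ and $\norm{y}\norm_2 \geq \norm{z}\norm_2$, we get $\norm{y}\norm_1^2 \leq 4\delta\norm{y}\norm_2^2$, and the elementary size estimate $\mu(S_t) \leq \norm{y}\norm_1/\sqrt{t}$ then restricts attention to thresholds $t$ large enough that $\mu(S_t) \leq \delta$. A Cheeger-type averaging over such thresholds, controlling the boundary $\sum_{ij} A_{ij}\abs{1_{S_t}(i) - 1_{S_t}(j)}$ and comparing it to the captured volume, would then convert the Rayleigh bound of the previous step into a single level set $S$ with $\mu(S) \leq \delta$ and $\Phi(S) \leq 1 - C\eps^2$.

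The hard part will be the final rounding estimate together with its coupling to the size constraint. The naive Cauchy--Schwarz Cheeger inequality, $\Phi(S) \lesssim \sqrt{1 - \la y, Ay\ra/\norm{y}\norm_2^2}$, is too lossy in this regime: when the Rayleigh quotient is bounded away from $1$ only by a constant, it is vacuous, so it cannot by itself deliver the additive non-expansion $C\eps^2$. The real work is therefore a sharper, localized rounding that turns the constant-order energy gap of $y$ into the additive bound while simultaneously enforcing $\mu(S) \leq \delta$; this coupling is the crux, and is exactly what the amplification $y = Az + z$ and the hypothesis $\norm{z}\norm_1^2 \leq \delta\norm{z}\norm_2^2$ are engineered to enable. (The statement is Steurer's and may simply be cited; the point of the route above is that it reproves it through the same amplify-then-round strategy used elsewhere in this note, without invoking the heavier machinery of \cite{barak-et-al-2012}.)
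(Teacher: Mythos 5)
You have not proved the lemma; you have reduced it to a sharper rounding statement that you then leave open, and that rounding statement \emph{is} the lemma. Your preparatory steps are sound: replacing $z$ by $\abs{z}$ only strengthens both hypotheses, the amplified vector $y \defeq (A+I)z$ satisfies $\la y, Ay \ra \geq 2\norm Az \norm_2^2 \geq 2\eps \norm z \norm_2^2$ and $\norm y \norm_2^2 \leq 4 \norm z \norm_2^2$, hence has Rayleigh quotient at least $\eps/2$, and $\norm y \norm_1^2 \leq 4\delta \norm y \norm_2^2$. But the step that converts a Rayleigh quotient bounded below by $\eps/2$ (together with the $\ell_1$--$\ell_2$ condition) into a level set with non-expansion $\Omega(\eps^2)$ is the entire content of the local Cheeger bound in this regime; you correctly note that the Cauchy--Schwarz rounding of Lemma \ref{Steurer-LCB} is vacuous here, and then assert without proof that a ``sharper, localized rounding'' exists. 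That threshold-averaging estimate --- comparing the expected boundary mass of the level sets $S_t = \{i : y_i^2 > t\}$ to their expected volume over a suitable distribution on $t$, which is where both the $\eps^2$ and the coupling to the size constraint come from --- is the hard part, and it is missing. For what it is worth, the paper itself supplies no proof either: the statement is imported verbatim from \cite{steurer-2010} (Lemma 2.1 there), so ``may simply be cited'' is exactly what the paper does.

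Two smaller mismatches between what your outline would deliver and what is claimed or used. First, after passing to $\abs{z}$ you produce a level set of $(A\abs{z}+\abs{z})^2$, not of $(Az+z)^2$ as the statement literally asserts; this is harmless for the application in Section \ref{high-expansion-section}, which only needs the existence of some poorly expanding small set. Second, your density guarantee degrades to roughly $\mu(S) \leq 4\delta$ (coming from $\norm y \norm_1^2 \leq 4\delta\norm y \norm_2^2$), while the proposition that invokes the lemma wants $\mu(S) \leq \delta$ against the hypothesis $\Phi(\delta) > 1 - 100\eps^2$; note the lemma as stated promises no density bound at all, so this slack would have to be absorbed by adjusting $\delta$ in the hypothesis, as the paper does with the factor of $4$ in Prop \ref{1-to-2}.
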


It suffices to take $C = 100$ above. 

Now, we repeat our argument but use this version of the LCB. 

\begin{prop}
Let $G = (V, E)$ be a finite undirected degree-regular graph, and $\delta, \eps > 0$. Suppose $\Phi(\delta) > 1 - 100\eps^2$. 

Then for any $2 \leq p \leq q \leq \infty$, we claim that $\norm P_{\sqrt{\eps}}\norm_{p \to q} < \frac{1}{\delta^{1/2}}$. 
\end{prop}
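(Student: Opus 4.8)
The plan is to mirror exactly the two-stage structure already used in the low-expansion regime (Proposition \ref{1-to-2} followed by Theorem \ref{main-thrm-full}), replacing the Local Cheeger Bound of Lemma \ref{Steurer-LCB} with its high-expansion counterpart, Lemma \ref{lcb-high-expansion}. The threshold $\sqrt\eps$ is chosen precisely so that a vector in the top eigenspace $V_{\sqrt\eps}$ meets the first hypothesis of Lemma \ref{lcb-high-expansion}: if $w \in V_{\sqrt\eps}$ then, writing $w = \sum_i \alpha_i u_i$ in an orthonormal eigenbasis, every surviving eigenvalue obeys $\lambda_i \geq \sqrt\eps$, hence $\lambda_i^2 \geq \eps$, so $\norm Aw \norm_2^2 = \frac 1n \sum_i \alpha_i^2 \lambda_i^2 \geq \eps \norm w \norm_2^2$. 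This is the analog of Lemma \ref{inner-prod-2-norm} adapted to the quantity $\norm Aw \norm_2^2$ that Lemma \ref{lcb-high-expansion} measures.

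First I would establish the statement in the dual regime $1 \leq p \leq q \leq 2$, by contradiction, in complete parallel with Proposition \ref{1-to-2}. Suppose $\norm P_{\sqrt\eps} \norm_{p \to q} \geq \frac{1}{\sqrt\delta}$; by Lemma \ref{hyp-equiv-projector-subspace} there is a witness $w \in V_{\sqrt\eps}$ with $\sqrt\delta \norm w \norm_q \geq \norm w \norm_p$. Monotonicity of the $L_r$ norms in $r$ (using $q \leq 2$ on the left and $p \geq 1$ on the right) upgrades this to $\delta^{q/2} \norm w \norm_2^q \geq \delta^{q/2} \norm w \norm_q^q \geq \norm w \norm_p^q \geq \norm w \norm_1^q$, and taking $(2/q)$-th powers yields the collision bound $\norm w \norm_1^2 \leq \delta \norm w \norm_2^2$. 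Together with $\norm Aw \norm_2^2 \geq \eps \norm w \norm_2^2$ from the previous paragraph (both conditions are scale-invariant, so no normalization is needed), the two hypotheses of Lemma \ref{lcb-high-expansion} are satisfied, and it produces a level set $S$ of $(Aw + w)^2$ with $\Phi(S) \leq 1 - 100\eps^2$. Since this contradicts $\Phi(\delta) > 1 - 100\eps^2$, the $[1,2]$ claim follows.

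Having the $1 \leq p \leq q \leq 2$ version, I would finish exactly as in Theorem \ref{main-thrm-full}: given $2 \leq p \leq q \leq \infty$, pass to \holder duals, note that $1 \leq q^* \leq p^* \leq 2$, apply the $[1,2]$ version to obtain $\norm P_{\sqrt\eps} \norm_{q^* \to p^*} < \frac{1}{\sqrt\delta}$, invoke \holder duality (Lemma \ref{holder-duality}) to rewrite this as $\norm P_{\sqrt\eps}^T \norm_{p \to q} < \frac{1}{\sqrt\delta}$, and use $P_{\sqrt\eps}^T = P_{\sqrt\eps}$ (Lemma \ref{projection-is-orthogonal}) to conclude.

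The main obstacle is bookkeeping the density of the level set $S$. To contradict $\Phi(\delta) > 1 - 100\eps^2$ I need $\mu(S) \leq \delta$, but Lemma \ref{lcb-high-expansion} as stated only advertises the expansion bound. The density control is implicit in the collision condition: writing $u = Aw + w$, the $L_1$-contractivity of the doubly stochastic $A$ gives $\norm u \norm_1 \leq 2\norm w \norm_1$, while $\la u, w \ra = \la Aw, w \ra + \norm w \norm_2^2 \geq \norm w \norm_2^2$ (using Lemma \ref{inner-prod-2-norm} for $w \in V_{\sqrt\eps}$) forces $\norm u \norm_2 \geq \norm w \norm_2$ by Cauchy--Schwarz, so $u$ inherits a collision bound $\norm u \norm_1^2 \leq 4\delta \norm u \norm_2^2$; this is what confines the relevant level set of $u^2$ to small density. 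I would need to verify that the level set delivered by Steurer's lemma indeed satisfies $\mu(S) \leq \delta$, and if only $\mu(S) \leq c\delta$ is available, absorb the constant by stating the hypothesis as $\Phi(c\delta) > 1 - 100\eps^2$, mirroring the factor of $4$ carried through Proposition \ref{1-to-2}.
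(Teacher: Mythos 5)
Your proposal follows essentially the same route as the paper's own proof: reduce to the regime $1 \le q^* \le p^* \le 2$ via \holder{} duality and self-adjointness of $P_{\sqrt{\eps}}$, extract a witness $w \in V_{\sqrt{\eps}}$ satisfying $\norm Aw \norm_2^2 \ge \eps \norm w \norm_2^2$ and $\norm w \norm_1^2 \le \delta \norm w \norm_2^2$, and invoke Lemma \ref{lcb-high-expansion} to produce a non-expanding small set, contradicting $\Phi(\delta) > 1 - 100\eps^2$. You are in fact more careful than the paper on one point: the paper's proof asserts $\mu(S) \le \delta$ for the level set even though Lemma \ref{lcb-high-expansion} as stated guarantees only the expansion bound, whereas you explicitly flag this and sketch how the collision condition on $Aw + w$ confines the level set's density (possibly at the cost of a constant absorbed into the hypothesis, as with the factor of $4$ in Proposition \ref{1-to-2}).
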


\begin{proof}
Following the proof of Theorem \ref{main-thrm-full}, it suffices to show that $\norm P_{\sqrt{\eps}}\norm_{q^* \to p^*} < \frac{1}{\delta^{1/2}}$. 

Assume towards a contradiction that there exists $w \in V_{\sqrt{\eps}}$ such that $\norm w \norm_{q^* \to p^*} \geq \frac{1}{\sqrt{\delta}}$. 

First, since $w \in V_{\sqrt{\eps}}$ it follows that $\norm Aw \norm_2^2 \geq \eps \norm w \norm_2^2$. 

Second, from $\norm w \norm_{q^* \to p^*} \geq \frac{1}{\sqrt{\delta}}$, and monotonicity of $L_p$ norms, and the fact that $1 \leq q^* \leq p^* \leq 2$, it follows that $\norm w \norm_1^2 \leq \delta \norm w \norm_2^2$. 

From these two facts about $w$, it follows from Lemma \ref{lcb-high-expansion} that there exists $S \subset V$ such that $\mu(S) \leq \delta$ and $\Phi(S) \leq 1 - 100 \eps^2$. Contradiction. 
\end{proof}

\section{Acknowledgments}

The author thanks Dana Moshkovitz for introducing him to the problem and collaboration on an earlier version of this manuscript. This material is based upon work supported by the National Science Foundation under grants number 1218547 and 1678712.

\bibliography{refs.bib}

\end{document}